\definecolor {processred}{cmyk}{0,0.96,0,0}
\newcommand{\cvd}{\hspace*{\fill}
	{\rm \hbox{\vrule height 0.2 cm width 0.2cm}}}
\renewcommand{\qed}{\cvd}
\newtheorem{theorem}{Theorem}[section]
\newtheorem{corollary}[theorem]{Corollary}
\newtheorem{lemma}[theorem]{Lemma}
\newtheorem{remark}[theorem]{Remark}
\newtheorem{proposition}[theorem]{Proposition}
\theoremstyle{definition}
\tikzstyle{vertex}=[circle, draw, inner sep=0pt, minimum size=6pt]
\newcommand{\Cos}{{\mathrm {Cos}}}
\newcommand{\Aut}{{\mathrm {Aut}}}
\newcommand{\Syl}{\mathrm{Syl}}
\newcommand{\Cay}{\mathrm{Cay}}
\newcommand*{\rom}[1]{\expandafter\@slowromancap\romannumeral #1@}
\begin{document}
	
\title{Perfect codes and regular sets in vertex-transitive graphs}
\author[A. Abdollahi]{Alireza Abdollahi$^1$}
\author[Z. Akhlaghi]{Zeinab Akhlaghi$^{2,3}$}
\author[M. Arezoomand]{Majid Arezoomand$^{4}$}

\address{$^{1}$ Department of Pure Mathematics, Faculty of Mathematics and Statistics, University of Isfahan, Isfahan 81746-73441, Iran.}

\address{$^{2}$ Faculty of Mathematics and Computer Science, Amirkabir University of Technology (Tehran Polytechnic), 15914 Tehran, Iran.}

\address{$^{3}$ School of Mathematics,
	Institute for Research in Fundamental Science (IPM)
	P.O. Box:19395-5746, Tehran, Iran. }

\address{$^{4}$ Department of Mathematics, Faculty of Science, Shahid Rajaee Teacher Training
	University, Tehran, 16785-163, I. R. Iran}


\email{ \newline \text{(A. Abdollahi)} 	a.abdollahi@math.ui.ac.ir  \newline \text{(Z. Akhlaghi) }z\_akhlaghi@aut.ac.ir  \newline \text{(M. Arezoomand) } arezoomand@sru.ac.ir}


\begin{abstract}
	
A subset \( C \) of the vertex set \( V \) of a graph \( \Gamma = (V,E) \) is termed an $(r,s)$-regular set if each vertex in \( C \) is adjacent to exactly \( r \) other vertices in \( C \), while each vertex not in \( C \) is adjacent to precisely \( s \) vertices in \( C \). A specific case, known as a $(0,1)$-regular set, is referred to as a perfect code. In this paper, we will delve into $(r,s)$-regular sets in the context of vertex-transitive graphs.
It is noteworthy that any vertex-transitive graph can be represented as a coset graph \( \Cos(G,H,U) \). When examining a group \( G \) and a subgroup \( H \) of \( G \), a subgroup \( A \) that encompasses \( H \) is identified as an $(r,s)$-regular set related to the pair \( (G,H) \) if there exists a coset graph \( \Cos(G,H,U) \) such that the set of left cosets of \( H \) in \( A \) forms an $(r,s)$-regular set within this graph. In this paper, we present both a necessary and sufficient condition for determining when a normal subgroup \( A \) that includes \( H \) as a normal subgroup qualifies as an $(r,s)$-regular set for the pair \( (G,H) \). Furthermore, if \( A \) is a normal subgroup of \( G \) containing \( H \), we establish a relationship between \( A \) being a perfect code of \( (G,H) \) and the quotient \( N_A(H)/H \) being a perfect code of \(( N_G(H)/H, {1_{N_{G}(H)/H}}) \).

\end{abstract}
\keywords{Perfect code · Subgroup perfect code · Regular set.  Coset graph. Vertex-transitive graph}
\subjclass[2000]{05C25 , 05C69 , 94B25}
\thanks{The second  author is supported by a Grant from IPM (no. 1404320014).}


\maketitle

\linespread{1.5}

\section{Introduction}

In this paper, we focus  on finite groups. For a given graph \( \Gamma \), we denote its vertices as \( \mathbf{V}(\Gamma) \) and its edges as \( \mathbf{E}(\Gamma) \). Thus, we  represent \( \Gamma \) as a simple graph \( (\mathbf{V}(\Gamma), \mathbf{E}(\Gamma)) \). If $x$ and $y$ are adjacent for some  $x,y\in {\bf V}(\Gamma)$, we say  $x\sim y$. A subset \( C \) of \( \mathbf{V}(\Gamma) \) is referred to as a perfect code if every vertex in the complement \( \mathbf{V}(\Gamma) \setminus C \) is adjacent to exactly one vertex in \( C \), and there are no edges connecting any two vertices within \( C \). A regular set serves as a generalization of the perfect code concept in graph theory, as outlined in \cite{R1}. Specifically, for non-negative integers \( r\) and \( s \), a subset \( C \) is termed an \( (r, s) \)-regular set within \( \Gamma \) if each vertex in \( \mathbf{V}(\Gamma) \setminus C \) is adjacent to precisely \( s \) vertices in \( C \), while every vertex in \( C \) has exactly \( r \) adjacent vertices also within \( C \). Importantly, a perfect code qualifies as a \( (0, 1) \)-regular set. 
For our graph \( \Gamma = (\mathbf{V}(\Gamma), \mathbf{E}(\Gamma)) \), a partition of \( \mathbf{V}(\Gamma) \) into cells \( \mathcal{V} = \{ V_1, \ldots, V_k \} \) is known as an equitable partition if each cell induces a regular subgraph, and the edges connecting any two distinct cells form a biregular bipartite graph, as detailed in \cite[Section 9.3]{godsil}. In other words, for \( i \neq j \), any vertex \( x \) in \( V_i \) is adjacent to \( b_{ij} \) vertices in \( V_j \), irrespective of the specific choice of \( x \). The quotient matrix corresponding to this partition \( \mathcal{V} \) is defined as a \( k \times k \) matrix \( M = (b_{ij}) \).

Considering a connected \( k \)-regular graph \( \Gamma \), all row sums of the quotient matrix \( M \) equal \( k \). Thus, \( k \) emerges as a simple eigenvalue of \( M \) \cite[Theorem 9.3.3]{godsil}. An equitable partition \( \mathcal{V} \) of \( \Gamma \) is designated as \( \mu \)-equitable if all eigenvalues of its quotient matrix \( M \), excluding \( k \), are consistently equal to \( \mu \). Furthermore, it was established in \cite[Corollary 2.3]{bcg} that a non-trivial coarsening of a \( \mu \)-equitable partition maintains its \( \mu \)-equitable status. Consequently, it becomes essential to thoroughly examine equitable partitions that consist of precisely two parts. In addition, an $(r, s)$-regular set within a $k$-regular graph $\Gamma$ is precisely a completely regular code $C$ in $\Gamma$ (refer to, for instance, \cite{n}). This configuration results in a distance partition that consists of exactly two parts: $\{C, {\bf V}(\Gamma) \setminus C\}$.  An equitable partition with these two parts is also referred to as a perfect 2-coloring \cite{2coloring}. The concept of perfect coloring is a frequent topic of investigation in the realm of coding theory \cite{hamming2coloring1, hamming2coloring2}.

A graph with a distance function \( d \) is defined as distance-transitive if for any vertices \( u, v, x, \) and \( y \) where \( d(u, v) = d(x, y) \), there exists an automorphism of the graph that maps \( u \) to \( x \) and \( v \) to \( y \). It is well-known that   Hamming graphs  are distance-transitive.  In the field of coding theory, Lloyd’s Theorem \cite{Lloyd} is an important tool used to show that perfect codes do not exist. Biggs \cite{2} built on Lloyd’s Theorem and applied it to perfect codes found in distance-transitive graphs. This work highlighted that distance-transitive graphs are crucial for studying perfect codes. For more information on perfect codes in these graphs, one can check out the references in \cite{1, 11, 9, 23, 24, 25}.

Distance-transitive graphs are a  subset of vertex-transitive graphs. A graph \( \Gamma \) is termed $G$-vertex-transitive (or vertex-transitive when \( G \) is implicit) if \( G \) is a subgroup of \( \text{Aut}(\Gamma) \) acting transitively on \( {\bf V}(\Gamma) \). Specifically, a $G$-vertex-transitive graph is recognized as a Cayley graph on \( G \) if \( G \) acts regularly on the vertex set, meaning that non-identity elements do not fix any vertex. There has been significant interest in exploring perfect codes and regular sets  within Cayley graphs alongside recent studies in \cite{ HXZ,khodom, paper, subregularset, regularset,note}. 

Although distance-transitive and Cayley graphs are both vertex-transitive, it is noteworthy that certain vertex-transitive graphs do not qualify as distance-transitive or Cayley graphs.  It is a well-established  theorem of Lorimer \cite{Lorimer} stating  a graph is $G$-vertex transitive if and only if it can be expressed as a coset graph \( \text{Cos}(G, H, U) \). 
  Let $G$ be a group and $H\leq G$. Let $U$ be a union of some double cosets of $H$ in $G$ such that $H\cap U=\varnothing$ and $U=U^{-1}$. The coset graph $\Gamma=\Cos(G,H,U)$ on $G/_\ell H$, , the set of left cosets of $H$ in $G$, is a graph defined as follows: the vertex set of $\Gamma$ is $G/_\ell H$ and two vertices $g_1H$ and $g_2H$ are adjacent if and only if $g_1^{-1}g_2\in U$. In the case $H=1$, the coset graph $\Cos(G,H,U)$ is called the Cayley graph of $G$ with respect to $U$ and denoted by $\Cay(G,U)$.  To advance the study of perfect codes in vertex-transitive graphs, in \cite{Wang-Zhang}
 a generalization of the concept surrounding a subgroup  perfect code of a finite group is proposed as follows:  Given a group $G$ and a subgroup $H$ of $G$, a subgroup $A$ of $G$ containing $H$ is called a subgroup perfect code of the pair $(G,H)$ if there exists a coset graph $\Cos(G,H,U)$ such that the set of left cosets of $H$ in $A$ is a  subgroup perfect code in $\Cos(G,H,U)$. 
   In this paper, we  generalize the study of perfect code of coset graphs to regular sets of such a graph and in our study we focus on  regular sets that are subgroups of underlying group of a coset graph.  	Given a group $G$ and a subgroup $H$ of $G$, a subgroup $A$ of $G$ containing $H$ is called an $(r,s)$-regular set of the pair $(G,H)$ if there exists a coset graph $\Cos(G,H,U)$ such that the set of left cosets of $H$ in $A$ is an $(r,s)$-regular set in $\Cos(G,H,U)$. A subgroup $A$ of $G$ is called $(r,s)$-regular set of $G$ if it is an $(r,s)$-regular set of $(G,1)$, equivalently if $A$ is  
   -regular set in a Cayley graph of $G$. 

Let \( G \) be a group and let \( H \leq A \leq G \). By \cite[Theorem 3.1]{Wang-Zhang},  \( A \) is a perfect code of \( (G,H) \) if and only if there exists a left transversal \( X \) of \( A \) within \( G \) such that the remarkable condition \( XH=HX^{-1} \) holds. We wish to extend this magnificent result to  \( (r,s) \)-regular sets through the following  theorem:


\textbf{Theorem A.}  
Let \( H \leq A \leq G \). Then \( A \) is an \( (r,s) \)-regular set of \( (G,H) \) if and only if there exists a subset \( X \subseteq G \) satisfying  \( XH=HX^{-1} \) and \( XH \cap A \) is a union of precisely \( r \) left cosets of \( H \) in \( G\setminus H \), while \( XH \cap tA \) is a union of  precisely   \( s \) left cosets of \( H \) for each \( t \in G\setminus A \).

Let \( A \) be a non-trivial subgroup of \( G \). It is proved in  \cite[Theorem B]{khodom} that for an integer \( r \) and an even integer \( s \), with \( 0 \leq r \leq |A| - 1 \) and \( 0 \leq s \leq |A| \), such that \( \gcd(2, |A| - 1) \) divides \( r \), the subgroup \( A \) is indeed an  \( (r,s) \)-regular set in \( G \). Additionally, by  \cite[Theorem C]{khodom}, \( A \) is a   perfect code of \( G \) if and only if it is an \( (r,s) \)-regular set of \( G \) for a pair of integers \( r,s \) with \( 0 \leq r \leq |A| - 1 \) and \( 0 \leq s \leq |A| \), where \( \gcd(2, |A| - 1) \) divides $r$, and \( s \) is an odd integer. 

It becomes a natural quest to seek such profound relationships within a coset graph of a group, serving as a generalization of the renowned Cayley graph. The subsequent result elegantly shows  this relationship when \( H \unlhd A \unlhd G \):

{\bf Theorem B.} 
Let \( H \unlhd A \unlhd G \) with \( 0 \leq r \leq |A:H| - 1 \) and \( 0 \leq s \leq |A:H| \). The set \( A \) is characterized as an \( (r,s) \)-regular set of \( (G,H) \) if and only if the following conditions are satisfied: 
\begin{itemize}
	\item [{(a)}] \( \gcd(2, |A:H| - 1) \) divides \( r \), 
	\item [{(b)}] \( |H|/|H \cap H^t| \) divides \( s \) for all \( t \in G\setminus A \),
	\item [{(c)}] for every \( x \in G\setminus A \) where \( x^2 \in A \) and \( s|H \cap H^x|/|H| \) is odd, there exists \( a \in A \) such that \( HxaH = H(xa)^{-1}H \).
\end{itemize}
 By Theorem B, in the  case  \( H=1 \), we obtain the following  result: 
 
{\bf Corollary B1.}
	Let \( A \unlhd G \), \( 0 \leq r \leq |A| - 1 \), and \( 0 \leq s \leq |A| \), with \( \gcd(2, |A| - 1) \)  dividing \( r \). Then
	\begin{itemize}
		\item [{(a)}] If \( s \) is even, then \( A \) is an \( (r,s) \)-regular set of \( G \).
		\item [{(b)}] If \( s \) is odd, then \( A \)  is an \( (r,s) \)-regular set of \( G \) if and only if \( xA \) possesses an involution for every \( x \in G\setminus A \) such that \( x^2 \in A \).
	\end{itemize}

Let \( G \) be a group and \( A \) a normal subgroup of \( G \). On a complementary note,  \cite[Theorem 2.2]{HXZ} states that 
  \( A \)  is a  perfect code of \( G \) if and only if for all \( x \in G \), \( x^2 \in A \) implies    \( (xa)^2 = 1 \) for some \( a \in A \).  
Thus, we derive the following result:

{\bf Corollary B2.}
	Let \( A \unlhd G \), \( 0 \leq r \leq |A| - 1 \) and \( 0 \leq s \leq |A| \), where \( \gcd(2, |A| - 1) \) divides \( r \). If $s$ is odd then 
	\( A \) is an \( (r,s) \)-regular set of \( G \) if and only if \( A \) is a perfect code of \( G \). 
   
 The result mentioned above is  among the main  results  of \cite{regularset, note}. Thus, Theorem B offers both a generalization and a reproof for those main results of \cite{regularset, note}.  Additionally, we discovered a connection between a normal subgroup \( A \) being a perfect code of \( (G, H) \) and \( N_A(H)/H \) serving as a perfect code of \( N_G(H)/H \). This relationship can be articulated as follows:

\textbf{Theorem C.}
Let $H\leq A\unlhd G$, $0\leq r\leq 
|N_A(H)/H|-1$ with $\gcd(2, |N_A(H)/H|-1)$ divides $r$ and $0\leq s\leq |N_A(H)/H|$. If
$G=N_G(H)A$ and $N_A(H)/H$ is an $(r,s)$-regular set of $N_G(H)/H$, then  $A$ is an $(r,s)$-regular set of $(G,H)$. The converse direction holds if $s=1$.  In particular, $A$ is a perfect code of $(G, H)$ if and only if $G=N_G(H)A$ and $N_A(H)/H$ is a perfect code of $N_G(H)/H$.

\begin{remark}
	The converse of above result is not true in general. To see this, put $G={\rm SL}(2,3)$, $Q_8\cong A\in\Syl_2(G)$ and $C_4\cong H < A$. Then $N_G(H)=A\unlhd G$ and so $G\neq N_G(H)A$.
	Let $H=\langle h\rangle$ and $\langle x\rangle\cong C_3$ be a Sylow $3$-subgroup of $G$. Take $X=\{hx,hx^{-1},x,x^{-1}\}$. Then $XH=xA\cup x^{-1}A=HX^{-1}$ and $XH\cap xA=hx H\cup xH$
	and $xH\cap x^{-1}A=hx^{-1}H\cup x^{-1}H$. This means that $A$ is a $(0,2)$-regular set of $(G,H)$.
\end{remark}

In \cite[Corollary 3.9]{Wang-Zhang} it is sated that:

\begin{lemma}
	 Let $G$ be a group, $H$ a subgroup of $G$ and $A$ a normal subgroup of $G$ such
	that $H \leq A\leq G$. If $A$ is a perfect code of $(G, H)$, then for any $x \in G$ with $x^2\in A$  there
	exists $b \in A$ such that $(xb)^2 \in H$.
\end{lemma}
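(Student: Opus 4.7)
The plan is to invoke the characterization from \cite[Theorem 3.1]{Wang-Zhang} (equivalently, the $r=0$, $s=1$ case of Theorem A): since $A$ is a perfect code of $(G,H)$, there exists a left transversal $X$ of $A$ in $G$ satisfying $XH = HX^{-1}$. The strategy is to translate the statement about $x$ into one about its representative in $X$, and then read off the desired $b \in A$ directly from the decomposition.

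Given $x \in G$ with $x^2 \in A$, I would decompose $x$ uniquely along the transversal as $x = ya$ with $y \in X$ and $a \in A$. Since $A \unlhd G$, the expansion $x^2 = (ya)(ya) = y^2 \cdot (y^{-1}ay) \cdot a$ has its last two factors in $A$, so the assumption $x^2 \in A$ forces $y^2 \in A$. In particular, $y^{-1}A = yA$.

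Next I would exploit the identity $XH = HX^{-1}$: as $y \in X \subseteq XH$, there exist $h \in H$ and $y' \in X$ with $y = h y'^{-1}$, i.e.\ $y' = y^{-1}h$. Since $h \in H \leq A$, this gives $y'A = y^{-1}A = yA$. Because $X$ meets each left coset of $A$ in exactly one point and $y \in X \cap yA$, uniqueness of the representative forces $y' = y$; consequently $y^2 = h \in H$.

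Finally, setting $b = a^{-1} \in A$ yields $xb = ya \cdot a^{-1} = y$, so $(xb)^2 = y^2 \in H$, as required. The step likely to need the most care is the identification $y' = y$: this is precisely where the hypothesis $x^2 \in A$ enters, propagated through its consequence $y^2 \in A$ (which collapses $y^{-1}A$ back to $yA$) and then closed off by the transversal property of $X$. Everything else is routine bookkeeping with the product decomposition $G = XA$.
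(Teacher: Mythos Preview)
Your proof is correct. The paper does not actually prove this lemma; it is quoted verbatim from \cite[Corollary 3.9]{Wang-Zhang} and used as a black box in the proof of Corollary~D, so there is no in-paper argument to compare against. Your approach is the natural one and almost certainly coincides with the original in \cite{Wang-Zhang}: invoke the characterization (recalled in the paper just before Theorem~A) to get a left transversal $X$ of $A$ with $XH=HX^{-1}$, use normality of $A$ to pass from $x$ to its transversal representative $y$ and deduce $y^{2}\in A$, then read $y\in HX^{-1}$ as $y'=y^{-1}h\in X$ and let the transversal property force $y'=y$, whence $y^{2}=h\in H$. The only delicate step is exactly the one you flagged, and your justification there is clean.
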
 
As a consequence of Theorem C,  we  prove the following, which shows that the converse of a modification of   the above Lemma   is true:

\textbf{Corollary D. } Let $G$ be a group, $H$ a subgroup of $G$ and $A$ a normal subgroup of $G$ such
that $H \leq A \leq G$. Then  $A$ is a perfect code of $(G, H)$, if and only if $G=AN_G(H)$ and  for any $x \in  G$ with $x^2 \in A$ there
exists $b\in A$ such that $xb\in N_G(H)$ and  $(xb)^2 \in H$.

\section{Main results}
\textbf{The proof of Theorem A.}
Let \( A \) be an \( (r,s) \)-regular set of \( (G,H) \). There exists a coset graph \( \Gamma = \Cos(G,H,U) \) such that \( A/_\ell H \) forms an \( (r,s) \)-regular set of \( \Gamma \). According to the definition of a coset graph, we have \( H \cap U = \varnothing \) and \( U^{-1} = U \).
Assuming that \( |G:A| = n \), let \( T = \{1,t_1,\ldots,t_{n-1}\} \) be a left transversal of \( A \) in \( G \). For any \( i \in \{1,\ldots,n-1\} \), it follows that \( t_i \notin A \), which means \( t_i^{-1}H \notin A/_\ell H \). Consequently, \( t_i^{-1}H \) is adjacent to exactly \( s \) elements in \( A/_\ell H \), denoted as \( \alpha_1^iH, \alpha_2^iH, \ldots, \alpha_s^iH \).
There are exactly $r$ elements $\beta_1H, \ldots, \beta_rH$ in $A/_\ell H$ that are adjacent to $H$. We will define two sets: 
$$Y = \{\beta_j \mid 1 \leq j \leq r\},$$ 
and 
$$Z = \bigcup_{i=1}^{n-1} \{t_i \alpha_j^i \mid 1 \leq j \leq s\}.$$
It indicates that for all $1 \leq i \leq n-1$, $1 \leq j \leq s$, and $1 \leq k \leq r$, we have $t_i\alpha_j{^i}, \beta_k \in U$. Clearly, $(Z \cup Y)H \subseteq U$. Next, we will demonstrate that $U = (Y \cup Z)H$. 
Assuming, for the sake of contradiction, that there is an element $u \in U \setminus (Y \cup Z)H$. This means we can express $u = ta_0$ for some $t \in T$ and $a_0 \in A$. Therefore, $t^{-1}H$ is adjacent to $a_0H$. If $t = 1$, then it follows that $a_0 = \beta_jh$ for some $1 \leq j \leq r$ and $h \in H$, leading to $u = a_0 \in YH$, which is a contradiction. Thus, we can assume that $t \neq 1$. 
In this case, $t = t_i$ for some $1 \leq i \leq n-1$. Since $t_i^{-1}H$ is adjacent to only $\alpha_i^jH$, it follows that $a_0H = \alpha_i^jH$ for some $1 \leq j \leq s$. Consequently, we have $u = t_i\alpha_i^j h$ for some $1 \leq j \leq s$ and $h \in H$, which implies $u \in ZH$, again leading to a contradiction. 
Thus, we conclude that \( U \subseteq (Y \cup Z)H \), which means \( U = (Y \cup Z)H \). Since \( U = U^{-1} \), we also find that \( (Y \cup Z)H = H(Y \cup Z)^{-1} \). We will now denote \( X := Y \cup Z \), and this completes our proof.

Conversely, assume \( X \) is a subset of \( G \) such that \( XH \cap A = \bigcup_{i=1}^r \alpha_iH \) (a union of \( r \) left cosets of \( H \)), where \( \alpha_iH \neq H \). For each \( t \in G \setminus A \), we have \( XH \cap tA = \bigcup_{i=1}^{s} \beta_i^tH \) (a union of \( s \) left cosets of \( H \)), and \( XH = HX^{-1} \). Therefore, \( XH = HXH = (HXH)^{-1} \). Let \( U = XH \). Then \( U = U^{-1} \). Moreover, \( U \cap H = \varnothing \), since \( \alpha_iH \neq H \) for all \( 1 \leq i \leq r \).

Next, we examine the graph \( \Gamma = \Cos(G,H,U) \). Clearly, every element in \( A/_\ell H \), such as \( a_0H \), is adjacent to exactly \( r \) elements \( a_0\alpha_iH \) for \( 1 \leq i \leq r \). Now let \( t \in G \setminus A \). Then \( t^{-1}A \cap XH = \bigcup_{i=1}^{s} \beta_i^{t^{-1}}H \) implies that \( t\beta_i^{t^{-1}} \in A \) for \( 1 \leq i \leq s \). Thus, \( tH \sim t\beta_i^{t^{-1}}H \) for \( 1 \leq i \leq s \). Consequently, every vertex in \( A/_\ell H \) is adjacent to at least \( r \) vertices in \( A/_\ell H \), and every vertex outside \( A/_\ell H \) is adjacent to at least \( s \) vertices in \( A/_\ell H \). 

Assume \( cH \sim tH \) for some \( c \in A \) and \( t \in G \setminus A \). Then \( t^{-1}c \in XH \cap t^{-1}A \). Hence, there exists \( \beta_j^{t^{-1}}H \subseteq XH \cap t^{-1}A \) such that \( t^{-1}cH = \beta_j^{t^{-1}}H \), where \( 1 \leq j \leq s \).
Assume \( cH \sim tH \) for some \( c \in A \) and \( t \in A \). Then, \( t^{-1}c \in XH \cap A \). Consequently, there exists \( \alpha_iH \subseteq XH \cap A \) such that \( t^{-1}cH = \alpha_iH \), where \( 1 \leq i \leq r \). Therefore, \( tH \) is adjacent to exactly \( s \) elements of \( A/_\ell H \) if \( t \in G \setminus A \), and it is adjacent to exactly \( r \) vertices in \( A/_\ell H \) if \( t \in A \), as desired. This indicates that \( A/_\ell H \) forms an \( (r,s) \)-regular set of \( (G,H) \). \( \blacksquare \)

 \smallskip
 
 If $A$ is an $(r,s)$-regular set of $(G, H)$, then it holds that \(0 \leq r \leq |A:H| - 1\) and \(0 \leq s \leq |A:H|\). We will demonstrate the validity of Theorem B using the following lemmas:
 
 \begin{lemma}\label{key1}
 Let $H\leq A\leq G$. Then  $A$ is an $(r,s)$-regular set of $(G,H)$ if and only if $A$ is a $(0,s)$-regular set and $(r,0)$-regular set of $(G,H)$. 
 \end{lemma}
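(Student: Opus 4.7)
The plan is to invoke Theorem A as the working characterization of $(r,s)$-regularity and exploit the fact that the two parameters $r$ and $s$ control information in two disjoint regions of $G$, namely $A$ and $G\setminus A$. More precisely, an $(r,s)$-witness $X$ has $U:=XH$ governed inside $A$ by the number $r$ and outside $A$ by the number $s$, so it should split into an $(r,0)$-witness supported in $A$ and a $(0,s)$-witness supported in $G\setminus A$.

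For the forward direction, suppose $A$ is $(r,s)$-regular and let $X\subseteq G$ be as in Theorem A, so $XH=HX^{-1}$, the intersection $XH\cap A$ is a union of $r$ nontrivial left $H$-cosets, and $XH\cap tA$ is a union of $s$ left $H$-cosets for every $t\in G\setminus A$. Set $U:=XH$, $U_1:=U\cap A$ and $U_2:=U\cap(G\setminus A)$. The critical observation is that both $A$ and $G\setminus A$ are closed under inversion (the first because $A$ is a subgroup, the second by complementation), so the identity $U=U^{-1}$ descends individually to $U_1=U_1^{-1}$ and $U_2=U_2^{-1}$. Since each $U_i$ is a union of left $H$-cosets, we may choose $X_i\subseteq U_i$ with $X_iH=U_i$; then $X_iH=U_i=U_i^{-1}=HX_i^{-1}$. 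Now $X_2$ witnesses $(r,0)$-regularity (because $X_2H\cap A=U_1$ is a union of $r$ nontrivial $H$-cosets and $X_2H\cap tA\subseteq A\cap tA=\varnothing$ for $t\in G\setminus A$), while $X_1$ witnesses $(0,s)$-regularity (because $X_1H\cap A=\varnothing$ and $X_1H\cap tA=U\cap tA$ is a union of $s$ left $H$-cosets for every $t\in G\setminus A$).

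For the reverse direction, pick witnesses $X_1$ (for $(0,s)$) and $X_2$ (for $(r,0)$) from Theorem A, and form $X:=X_1\cup X_2$. Set $U_2:=X_1H$ and $U_1:=X_2H$. Then $U_1\subseteq A$ and $U_2\cap A=\varnothing$, so the union $XH=U_1\cup U_2$ intersects $A$ in exactly $U_1$ (a union of $r$ nontrivial $H$-cosets) and intersects each $tA$ with $t\in G\setminus A$ in exactly $U_2\cap tA=X_1H\cap tA$ (a union of $s$ $H$-cosets). The involutive symmetry is preserved because $HX^{-1}=HX_1^{-1}\cup HX_2^{-1}=X_1H\cup X_2H=XH$. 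Applying Theorem A once more, $A$ is an $(r,s)$-regular set of $(G,H)$.

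The only subtle point in this argument is the use of the inversion-closedness of both $A$ and its complement to justify that the global symmetry $XH=HX^{-1}$ splits cleanly along the partition $\{A,G\setminus A\}$; everything else is routine bookkeeping with left cosets.
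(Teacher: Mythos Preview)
Your argument is correct and follows essentially the same route as the paper's proof: split $U=XH$ along the partition $\{A,\,G\setminus A\}$, use that both pieces are inverse-closed, and for the converse take the union of the two connection sets. The only blemish is a label swap in the last sentence of the forward direction: since $X_iH=U_i$ with $U_1=U\cap A$ and $U_2=U\cap(G\setminus A)$, it is $X_1$ that witnesses $(r,0)$-regularity (as $X_1H\cap A=U_1$ and $X_1H\subseteq A$) and $X_2$ that witnesses $(0,s)$-regularity, not the other way around.
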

\begin{proof}
	If \( A \) is an \( (r,s) \)-regular set of \( (G,H) \), then, according to Theorem A, there exists a subset \( X \subseteq G \) such that \( U := XH = HX^{-1} \) forms a union of certain double cosets of \( H \). Furthermore, \( U = U^{-1} \), and for all \( t \in G \setminus A \), the intersection \( U \cap tA \) is a union of \( s \) disjoint left cosets of \( H \). Similarly, \( U \cap A \) is a union of \( r \) disjoint left cosets of \( H \). We can define \( U_0 = U \cap A \) and \( U_1 = U \cap (G \setminus A) \). It is clear that both \( U_0 \) and \( U_1 \) are unions of certain double cosets of \( H \), and we have \( U_0 = U_0^{-1} \) and \( U_1 = U_1^{-1} \). Additionally, \( U_0 \cap tA \) is the empty set for all \( t \in G \setminus A \), while \( U_0 \cap A = U_0 \) represents a union of \( r \) disjoint left cosets of \( H \). In a similar manner, \( U_1 \cap A \) is empty, and \( U_1 \cap tA = U \cap tA \) constitutes a union of \( s \) disjoint left cosets of \( H \). Therefore, \( A \) is recognized as both a \( (r,0) \)-regular set and a \( (0,s) \)-regular set of \( (G,H) \). In fact, \( A \) is a \( (r,0) \)-regular set of \( \Cos(G,H,U_0) \) and a \( (0,s) \)-regular set of \( \Cos(G,H,U_1) \).
	
	Conversely, suppose \( A \) is a \( (r,0) \)-regular set and a \( (0,s) \)-regular set of \( (G,H) \). In this case, there exist inverse-closed subsets \( U_0 \) and \( U_1 \) (which are clearly unions of specific double cosets of \( H \)) such that \( U_0 \cap (A \setminus H) \) is a union of \( r \) disjoint left cosets of \( H \), while \( U_0 \cap (G\setminus A) = \varnothing \). Additionally, \( U_1 \cap tA \) forms a union of \( s \) disjoint left cosets of \( H \), with \( U_1 \cap A = \varnothing \). Consequently, by setting \( U = U_0 \cup U_1 \), we confirm that \( A \) is an \( (r,s) \)-regular set of \( \Cos(G,H,U) \).
	   \end{proof}
\begin{lemma}\label{key2}
	Let $H\unlhd A\unlhd G$ and $0\leq s\leq |A:H|$ be an integer.   Then $A$ is a $(0,s)$-regular set of $(G,H)$ if and only if {both of the following conditions hold}
	\begin{itemize}
		\item [(1)] $|H|/|H\cap H^t|$ divides $ s$ for every $t\in G\setminus A$,
		\item[(2)] for  each  $x\in G\setminus A$ where   $x^2\in A$ and  $s|H\cap H^x|/|H|$ is odd,   there exists $a\in A$ such that $HxaH=H(xa)^{-1}H$.
	\end{itemize}  
\end{lemma}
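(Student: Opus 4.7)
The plan is to use Theorem A to recast the $(0,s)$-regular-set property of $A$ as a combinatorial condition on $U := XH$: namely, $U$ must be an inverse-closed union of double cosets of $H$ lying in $G\setminus A$ whose intersection with each left coset $tA$ is a union of exactly $s$ left cosets of $H$. The pivotal structural observation coming from $H\unlhd A\unlhd G$ is that since $A$ normalizes $H$, for each $a\in A$ conjugation by $a$ maps $H\cap H^t$ onto $H\cap H^{ta}$; hence $|H\cap H^y|$ is constant as $y$ ranges over $tA$, every double coset $HyH$ with $y\in tA$ is a union of exactly $d_t := |H|/|H\cap H^t|$ left cosets of $H$, and the double cosets partitioning $tA$ all have the same size.

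For necessity I would apply Theorem A and invoke this constant-size observation: $U\cap tA$ is a union of double cosets, each contributing $d_t$ left cosets of $H$, so the total $s$ must be a multiple of $d_t$, which is (1). For (2), when $x^2\in A$ we have $(xA)^{-1}=x^{-1}A=xA$, so $U\cap xA$ is inverse-closed; the $m:=s/d_x$ double cosets inside $U\cap xA$ split into $k_1$ self-inverse and $k_2$ inverse-paired double cosets with $m=k_1+2k_2$. If $m=s|H\cap H^x|/|H|$ is odd, then $k_1\ge 1$, producing some $y=xa\in xA$ with $HyH=Hy^{-1}H$, which is (2).

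For sufficiency, assuming (1) and (2), I would build $U$ one left coset of $A$ at a time. Partition $\{tA:t\in G\setminus A\}$ into unordered pairs $\{tA,t^{-1}A\}$ (a singleton precisely when $t^2\in A$) and process each pair independently. Let $m_t:=s/d_t$, an integer by (1); the bound $s\le|A:H|$ ensures $m_t$ is at most the total number of double cosets of $H$ in $tA$. When $tA\ne t^{-1}A$, pick any $m_t$ double cosets in $tA$ and adjoin their inverses in $t^{-1}A$. When $tA=t^{-1}A$: if $m_t$ is even, take $m_t/2$ inverse-pairs; if $m_t$ is odd, use (2) to secure one self-inverse double coset and then fill out to $m_t$ total using inverse-pairs, supplementing with further self-inverse cosets if the pair inventory runs short. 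The resulting $U$ is inverse-closed, lies in $G\setminus A$, and meets each $tA$ in exactly $s$ left cosets of $H$, so Theorem A delivers the regular-set conclusion.

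The main obstacle I anticipate is the feasibility check in the $t^2\in A$, $m_t$ odd case of sufficiency: after invoking (2), one must verify that the remaining $m_t-1$ double cosets in $tA$ can be assembled into an inverse-closed collection from the available pairs and other self-inverse double cosets. A short counting argument using $m_t\le$ (total number of double cosets in $tA$) together with the parity invariance of the trade ``one inverse-pair $\leftrightarrow$ two self-inverse double cosets'' shows the inventory is always sufficient; this is the one place where the bound $s\le|A:H|$ pulls its weight.
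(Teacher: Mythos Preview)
Your proposal is correct and follows essentially the same route as the paper: both use Theorem A to reduce to an inverse-closed union $U$ of double cosets of $H$ meeting each $tA$ in $s$ left cosets, both exploit $H\unlhd A\unlhd G$ to obtain constant double-coset size $d_t=|H|/|H\cap H^t|$ on $tA$, and both build $U$ in the sufficiency direction by treating the pairs $\{tA,t^{-1}A\}$ separately, using (2) to supply a self-inverse double coset when $m_t=s/d_t$ is odd. The feasibility check you flag as the ``main obstacle'' is exactly the case split the paper carries out (pairs first, then self-inverse cosets to top up), and your counting/parity justification via $m_t\le |A:H|/d_t$ matches theirs.
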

\begin{proof}
	Let \( t \in G \setminus A \) and \( x \in tA \cup t^{-1}A \). In this case, we find that \( HxH = HatH \) or \( Ha{t^{-1}}H \) for some \( a \in A \). Since \( H \) is normal in \( A \) and \( |H \cap H^t| = |H \cap H^{t^{-1}}| \), it follows that for any \( x \in tA \cup t^{-1}A \), we have:
		\[
	\frac{|HxH|}{|H|} = \frac{|H|}{|H \cap H^t|}.
	\]
	Consequently, this leads us to conclude that \( |HxH| = |HyH| \) for all \( x, y \in tA \cup t^{-1}A \).
	Now, assume \( A \) is a \((0,s)\)-regular set of \((G,H)\) where \( t \in G \setminus A \). By Theorem A, there exists a subset \( X \) such that \( XH = HX^{-1} \). We define \( U := HXH \). Thus, \( U = XH = U^{-1} \) and we find that \( HXH \cap tA = \bigcup_{i=1}^{l}Ht_iH \) for some integer \( l \) and \( t_i \in tA \), since \( A \unlhd G \). Furthermore, \( HXH \cap tA \) consists of exactly \( s \) left cosets of \( H \), leading to the relation:
		\[
	s = l \frac{|Ht_iH|}{|H|} = l \frac{|HtH|}{|H|} = l \frac{|H|}{|H \cap H^t|}.
	\]
		This verifies our desired result.

		Let \( x \in G \setminus A \) with \( x^2 \in A \), and suppose that \( s | H \cap H^x| / |H| \) is odd. Therefore, we have \( xA = x^{-1}A = \bigcup_{i=1}^{l} H\{t_i, t_i^{-1}\}H \), where \( t_i \in xA \) and \( l \) is an integer. Given that \( A \) is a \((0,s)\)-regular set, we can assume that \( HXH \cap xA = \bigcup_{i=1}^{l'} H\{t_i, t_i^{-1}\}H \), for some integer \( l' \leq l \). If for all \( 1 \leq i \leq l' \), it holds that \( H t_i H \neq H t_i^{-1} H \), then \( HXH \cap xA \) would represent a union of \( s = 2l' \left( |H|/|H \cap H^x| \right) \) disjoint left cosets of \( H \), which leads to a contradiction under our assumption. Thus, there must exist some \( t_i \) (where \( 1 \leq i \leq l' \)) such that \( H t_i H = H t_i^{-1} H \). It is important to note that \( t_i \in xA \), which implies that there exists \( a \in A \) such that \( t_i = xa \), as required.

	 Assume that \( s \) is divided by \( \frac{|H|}{|H \cap H^t|} \) for each \( t \in G \setminus A \) and if  \( x^2 \in A \) and \( \frac{s|H \cap H^x|}{|H|} \) is odd for some \( x \in G \setminus A \), then there exists \( a \in A \) such that \( HxaH = H(xa)^{-1}H \). We define \( l_t = \frac{s}{\left(\frac{|H|}{|H \cap H^t|}\right)} \).
	 
	 	 Now, assume \( tA \neq t^{-1}A \). We have \( tA = \bigcup_{i=1}^l Ht_iH \) and \( t^{-1}A = \bigcup_{i=1}^l Ht_i^{-1}H \) for some \( t_i \in tA \) and an integer \( l \). It is clear that \( l_t \leq \frac{|A:H|}{\left(\frac{|HtH|}{|H|}\right)} = \frac{|A|}{|HtH|} = l \). 
	 	 Next, we define \( U_t = \bigcup_{i=1}^{l_t} H \{t_i, t_i^{-1}\}H \). Notice that \( U_t^{-1} = U_t \) and \( U_t \cap tA \) is a union of \( l_t \times \frac{|H|}{|H \cap H^t|} = s \) disjoint left cosets of \( H \).
	
	Now,  assume that \( tA = t^{-1}A = \bigcup_{i=1}^{l} H\{t_i, t_i^{-1}\}H \), where \( t_i \in tA \) and \( l \) is an integer. It is possible that for some \( i \), we have \( Ht_iH = Ht_i^{-1}H \). Without loss of generality, we can assume that there exists \( m \leq l \) such that for \( 1 \leq i \leq m \), \( Ht_iH \neq Ht_i^{-1}H \), and for \( i > m \), \( Ht_iH = Ht_i^{-1}H \). 
	
	First, suppose that \( l_t \) is an even integer. If \( l_t < 2m \), we define \( U_t = \bigcup_{i=1}^{l_t/2} H\{t_i, t_i^{-1}\}H \) and if \( l_t > 2m \), we set \( U_t = \bigcup_{i=1}^{m} H\{t_i, t_i^{-1}\}H \cup \bigcup_{i=m+1}^{l_t-2m} Ht_iH \). In both scenarios, we find that \( U_t = U_t^{-1} \) and that \( U_t \) is a union of \( s = \frac{l_t \cdot |H|}{|H \cap H^t|} \) disjoint left cosets of \( H \).	
	Next, consider the case when \( l_t \) is an odd integer. According to our assumption, there exists \( a \in A \) such that \( HtaH = H(ta)^{-1}H \). This indicates that \( m < l \), allowing us to assume that \( Ht_lH = Ht_l^{-1}H \). If \( l_t \leq 2m + 1 \), we define \( U_t = \left(\bigcup_{i=1}^{(l_t-1)/2} H\{t_i, t_i^{-1}\}H\right) \cup Ht_lH \). On the other hand, if \( l_t > 2m + 1 \), we set \( U_t = \left(\bigcup_{i=1}^{m} H\{t_i, t_i^{-1}\}H\right) \cup \bigcup_{i=m+1}^{l_t-2m} Ht_iH \). In both cases, we again find \( U_t = U_t^{-1} \), and \( U_t \) is a union of \( s \) disjoint left cosets of \( H \).
	
	We know that \( G = \bigcup_{t \in T} (tA \cup t^{-1}A) \) for some subset \( T \subseteq G \), and that \( (tA \cup t^{-1}A) \cap (yA \cup y^{-1}A) = \varnothing \) for each \( y \neq t \in T \). We then define \( U = \bigcup_{t \in T \setminus A} U_t \). It is clear that \( U^{-1} = U \), \( U \cap A = \varnothing \), and consequently \( U \cap H = \varnothing \). Furthermore, since \( U \cap (tA \cup t^{-1}A) = U_t \), we conclude that \( U \cap tA \) is a union of \( s \) disjoint left cosets of \( H \). Therefore, we establish that \( A \) is a \( (0,s) \)-regular set of \( (G,H) \), thus completing the proof.
\end{proof}
\begin{lemma}\label{key3}
	Let $H\unlhd A \leq G$ and $0\leq r\leq |A:H|-1$  is an integer. Then $A$ is {an} $(r,0)$-regular set of $(G,H)$ if and only if $\gcd(2, |A: H|-1)$ divides $r$. 
	\end{lemma}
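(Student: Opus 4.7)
The plan is to specialize Theorem A to $s=0$ and reduce the problem to a purely combinatorial question in the quotient group $A/H$. With $s=0$, the Theorem A condition ``$XH \cap tA = \varnothing$ for every $t \in G \setminus A$'' forces $U := XH \subseteq A$, while $XH = HX^{-1}$ gives $U^{-1} = U$. Since $H \unlhd A$, the set $U$ descends to a subset of the quotient group $K := A/H$, and the remaining conditions translate cleanly to: there exists an inverse-closed subset $S \subseteq K \setminus \{1_K\}$ with $|S| = r$. Conversely, any such $S$ lifts to $U := \bigcup_{sH \in S} sH \subseteq A$ and then to a valid $X$ via Theorem A. Thus $A$ is an $(r,0)$-regular set of $(G,H)$ if and only if $K \setminus \{1_K\}$ admits an inverse-closed subset of cardinality $r$.

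Next I would decompose $K \setminus \{1_K\}$ into the set $I$ of involutions of $K$ and the inverse-pairs $\{x, x^{-1}\}$ with $x \neq x^{-1}$. Any inverse-closed subset of $K \setminus \{1_K\}$ consists of an arbitrary subset of $I$ together with an arbitrary union of inverse-pairs, so its cardinality has the form $a + 2b$ for integers $0 \leq a \leq |I|$ and $0 \leq b \leq (|K|-1-|I|)/2$. When $|A:H|$ is odd, Lagrange's theorem forces $|I| = 0$, so only even values of $r$ occur, and conversely every even $r$ with $0 \leq r \leq |K|-1$ is realized by selecting $b = r/2$ inverse-pairs. When $|A:H|$ is even, Cauchy's theorem gives $|I| \geq 1$, and I would verify by a short case split on the parity of $r$ that every $r$ with $0 \leq r \leq |K|-1$ is attainable as $a + 2b$ within the allowed ranges.

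These two cases match the condition $\gcd(2, |A:H|-1) \mid r$ exactly: the gcd equals $2$ in the odd case (forcing $r$ even) and equals $1$ in the even case (imposing no constraint). The main obstacle is the feasibility check in the even case, but given the identity $|K|-1 = |I| + 2p$ with $p$ the number of inverse-pairs and $|I| \geq 1$, it is immediate: for $r \leq 2p$ take $a \in \{0,1\}$ matching the parity of $r$ and $b = \lfloor r/2 \rfloor$, and for $r > 2p$ take $b = p$ and $a = r - 2p \leq |I|$. This completes the plan.
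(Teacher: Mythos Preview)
Your proposal is correct and follows essentially the same approach as the paper's proof: both reduce via Theorem~A (using $s=0$ to force $U\subseteq A$) to the question of which cardinalities are realized by inverse-closed subsets of $(A/H)\setminus\{1\}$, and both settle that question by the involution/inverse-pair decomposition together with the parity split on $|A:H|$. Your $a+2b$ bookkeeping is a slightly cleaner packaging of the same case analysis the paper carries out explicitly.
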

\begin{proof}Let $\overline{A} = A/H$ and let $e = \overline{H}$ be the trivial element of $\overline{A}$. First, we assume that $A$ is an $(r, 0)$-regular set of $(G, H)$. This implies the existence of a set $U$, which is an inverse-closed union of some double cosets of $H$. Furthermore, since $U \cap (G \setminus A) = \varnothing$, it follows that $U \subseteq A$. Consequently, $U$ must be a union of $r$ disjoint left cosets of $H$. Thus, we have $U \subseteq \overline{A}$, where $U \cap H = \varnothing$ and $U$ is inverse-closed. This means that $\overline{A} \setminus \{e\}$ contains an inverse-closed set with $r$ elements. If $|\overline{A}|$ is odd, then all non-trivial elements of $\overline{A}$ have odd order, and since $U$ is an inverse-closed subset of $\overline{A} \setminus \{e\}$, we deduce that $U$ is a union of an even number of elements of $\overline{A}$. Therefore, $2 = \gcd(2, |\overline{A}| - 1)$ divides $r$. If $|\overline{A}|$ is even, it is evident that $r$ is  divided by $1 = \gcd(2, |\overline{A}| - 1)$.
	
	Now, we need to prove the converse direction. Assume $|\overline{A}|$ is even, which implies that $\overline{A}$ has an involution. Let $I$ denote the set of all involutions of $\overline{A}$. To complete the proof, it suffices to demonstrate that for every integer $0 \leq r \leq |\overline{A}| - 1$, there exists an inverse-closed subset $U$ in $\overline{A} \setminus \{e\}$ of size $r$. In fact, if $r$ is an even integer less than $|\overline{A} \setminus (\{e\} \cup I)|$, then $U$ can be any inverse-closed subset of $\overline{A} \setminus (\{e\} \cup I)$. If $r > |\overline{A} \setminus (\{e\} \cup I)|$ is an even integer, we can take $U = \overline{A} \setminus (\{e\} \cup I) \cup J$, where $J \subset I$ has size $r - |\overline{A} \setminus (\{e\} \cup I)|$. If $r \leq |\overline{A} \setminus (\{e\} \cup I)| + 1$ is odd, we can choose an inverse-closed subset $U_0 \subseteq \overline{A} \setminus (\{e\} \cup I)$ of size $r - 1$ and set $U = U_0 \cup \{\alpha\}$, where $\alpha \in I$ is an arbitrary element. If $r \geq |\overline{A} \setminus (\{e\} \cup I)| + 1$ is odd, we set $U = (\overline{A} \setminus (\{e\} \cup I)) \cup J$, where $J \subseteq I$ is any arbitrary subset of size $r - |\overline{A} \setminus (\{e\} \cup I)|$. In the case when $|\overline{A}|$ is odd, a similar argument leads to the conclusion that all inverse-closed subsets of $\overline{A} \setminus \{e\}$ have even size. Furthermore,  in this case  for all even integers $r \leq |\overline{A} \setminus \{e\}| = |A : H| - 1$, we can find at least one inverse-closed subset $U$ of size $r$.
\end{proof}

{\bf The proof of Theorem B. }
	It is straightforward using Lemmas \ref{key1},  \ref{key2} and \ref{key3}. $\blacksquare$ 

\smallskip 

We employ the following lemma to demonstrate Theorem C. It is important to note that if we include the assumption of normality for \( H \) in the statement of the lemma below, it can be derived as a consequence of Theorem B.  

\begin{lemma}\label{khodom}(see \cite[Theorem B]{khodom})
	Let $H$ be a nontrivial subgroup of $G$. Then, for an integer $r$ and an even
	integer $s$, with $0 \leq  r \leq |H|-1$ and $0 \leq  s \leq  |H|$, such that $\gcd(2, |H|-1)$ divides
	$r$, $H$ is an $(r, s)$-regular set in $G$. 
\end{lemma}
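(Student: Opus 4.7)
My plan is to decompose the problem using Lemma \ref{key1} with inner subgroup trivial and outer subgroup $A = H$: it suffices to realize $H$ as both an $(r,0)$-regular set and a $(0,s)$-regular set of $(G,1)$, then take the union of the two connection sets. The $(r,0)$-half is immediate from Lemma \ref{key3} applied with outer subgroup $A = H$, since the hypothesis $\gcd(2,|H|-1) \mid r$ is precisely the divisibility condition that lemma requires in order to produce an inverse-closed subset $U_{0} \subseteq H \setminus \{1\}$ of size $r$.

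The substantive task is therefore to construct an inverse-closed set $U_{1} \subseteq G \setminus H$ with $|U_{1} \cap xH| = s$ for every non-trivial left coset $xH$, using only that $s$ is even. Since the inverses of elements of $xH$ lie in the right coset $Hx^{-1}$, which is generally spread across several left cosets, I would organize the construction by double cosets. Write $G \setminus H = \bigsqcup_{D} D$ over double cosets $D = HxH$; the inversion map sends $D$ to $D^{-1} = Hx^{-1}H$, so double cosets come in inversion-pairs, either of two distinct members or of a single self-inverse member, and it suffices to handle each such pair independently.

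For each pair $\{D,D^{-1}\}$ with $D \neq D^{-1}$, I would form a bipartite multigraph whose two parts are the left cosets of $H$ in $D$ and in $D^{-1}$, with one edge for each inversion-pair $\{g,g^{-1}\}$ with $g \in D$. Every vertex has degree $|H|$, so by K\"onig's edge-coloring theorem this multigraph decomposes into $|H|$ perfect matchings; selecting any $s$ of them and placing both endpoints of each chosen edge into $U_{1}$ yields exactly $s$ elements in each left coset of $D$ and of $D^{-1}$, with inverse-closedness automatic.

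For a self-inverse double coset $D = D^{-1}$, I would form a single multigraph on the left cosets of $H$ in $D$, with edges from non-involution inversion-pairs (loops when both members lie in the same left coset) and half-edges from involutions in $D$; this multigraph is $|H|$-regular. When $|H|$ is even, Petersen's 2-factor theorem produces an $s$-regular spanning submultigraph directly. When $|H|$ is odd, $|D| = [H : H \cap H^{x^{-1}}] \cdot |H|$ is odd and the degree parity forces an odd number of half-edges at every vertex, so I would pare away one half-edge per vertex to reduce to the even-regular case before invoking Petersen. This parity bookkeeping in the self-inverse, odd-$|H|$ case is the main obstacle; once it is handled, $U = U_{0} \cup U_{1}$ witnesses $H$ as an $(r,s)$-regular set of $\Cay(G,U)$.
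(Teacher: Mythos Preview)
The paper does not prove this lemma; it simply quotes \cite[Theorem B]{khodom}, so there is no in-paper argument against which to compare. Your reduction via Lemmas~\ref{key1} and~\ref{key3} to the $(0,s)$ problem is correct, as is the K\"onig argument for double cosets with $D\ne D^{-1}$.

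The self-inverse case has a real gap. First, for $|H|$ odd, ``degree parity forces an odd number of half-edges at every vertex'' is not an argument: the degree $|H|$ at a vertex $kH$ equals $(\text{half-edges})+2(\text{loops})+(\text{non-loop edges})$, so oddness of $|H|$ only yields $(\text{half-edges})+(\text{non-loop edges})$ odd. The desired conclusion is true, but you must actually show it---for instance by observing that $h\mapsto (khk)^{-1}$ is an involution on $S=\{h\in H:khk\in H\}$ whose fixed points correspond to the involutions of $kH$, and that $D=D^{-1}$ forces $|S|=|H\cap kHk^{-1}|$, which is odd when $|H|$ is. Second, and more damaging, for $|H|$ even you cannot invoke Petersen ``directly'': the multigraph still carries half-edges, and the involution count per left coset can be odd. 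Take $G=S_4$, $H=\langle(1234)\rangle$: the self-inverse double coset $G\setminus N_G(H)$ has four left cosets, each containing exactly one involution, so every vertex has a single half-edge and stripping them leaves a $3$-regular multigraph to which Petersen does not apply. Your even-$|H|$ branch therefore needs its own treatment; contrary to your closing sentence, it is this case, not the odd one, that is the main obstacle.
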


\smallskip  

\textbf{The proof of Theorem C.}
Assume first that \( G = N_G(H)A \) and that \( N_A(H)/H \) is an \( (r,s) \)-regular set of \( N_G(H)/H \). Let \( T \subseteq N_G(H) \) be a left transversal of \( A \) in \( G \). Consequently, \( T \) serves as a left transversal of \( N_A(H) \) in \( N_G(H) \), which implies that \( \{tH \mid t \in T\} \) is a left transversal of \( N_A(H)/H \) in \( N_G(H)/H \). 
Since \( N_A(H)/H \) is an \( (r,s) \)-regular set of \( N_G(H)/H \), according to Theorem A, there exists \( XH \subset N_G(H)/H \) such that \( (XH)^{-1} = XH \) and \( XH - N_A(H)/H \) is a union of \( s \) disjoint left transversals of \( N_A(H)/H \) in \( N_G(H)/H - N_A(H)/H \). Additionally, the intersection \( XH \cap N_A(H)/H \) consists of \( r \) left cosets of \( H \). 
In other words, for each \( t \in T \setminus A \), we have \( |XH \cap (tH)N_A(H)/H| = s \) and \( |XH \cap N_A(H)/H| = r \). It is important to note that \( N_A(H)/H \subseteq A/_\ell H \) and \( tN_A(H)/H \subseteq t(A/_\ell H) \). Thus, since \( XH \subset N_G(H)/H \), we can conclude that \( XH \cap tA \) contains at least \( s \) left cosets of \( H \), denoted as \( \{x_1H, \dots, x_sH\} \), where \( t \in G \setminus A \). 
If \( XH \cap tA \) contained more than \( s \) left cosets of \( H \), there would exist \( x_{s+1} \in X \) such that \( x_{s+1}H \subseteq tA \). Furthermore, \( x_{s+1}H \) must belong to \( t_0N_A(H)/H \) for some other \( t \neq t_0 \in T \), since \( XH \) is a subset of \( N_G(H)/H \) and \( T \) is a left transversal of \( N_A(H) \) in \( N_G(H) \). This leads to \( x_{s+1}H \subseteq (tA \cap t_0A) \), which is a contradiction, as \( tA \cap t_0A = \varnothing \). 
Therefore, for each \( t \in G \setminus A \), \( XH \cap tA \) is precisely a union of \( s \) left cosets of \( H \). Similarly, \( XH \cap A \), by the same reasoning, is a union of exactly \( r \) left cosets of \( H \). This indicates that \( A \) is an \( (r,s) \)-regular set of \( (G,H) \), thereby completing the proof of the first part.

Assume that \( A \) is an \( (r,1) \)-regular set of \( (G,H) \).  According to Theorem A, there exists a subset \( X \subseteq G \) such that \( XH = HX^{-1} \), and \( XH \cap A \) is a union of \( r \) distinct left cosets of \( H \). Additionally, for each \( t \in G \setminus A \), \( XH \cap tA \) is a left coset of \( H \). Let us define \( XH \cap A = \bigcup_{i=1}^r \alpha_i H \) and \( XH \cap tA = \beta^t H \), where \( \alpha_i, \beta^t \in X \) for \( 1 \leq i \leq r \) and \( t \in G \setminus A \). 

Next, let \( T = \{1, t_1, t_2, \ldots, t_n\} \) be a left transversal of \( A \) in \( G \). Then, we have 
\[ 
X = \{\alpha_i\}_{i=1}^r \cup \{\beta^{t_j}\}_{j=1}^n. 
\] 
We need to show that \( \beta^{t_j} \in N_G(H) \) for \( 1 \leq j \leq n \). Without loss of generality, assume that \( \beta^{t_1} \notin N_G(H) \). This leads to \( H(\beta^{t_1})^{-1}H \neq (\beta^{t_1})^{-1}H \), with \( H\beta^{t_1}H \subseteq t_1A \) and \( H(\beta^{t_1})^{-1}H \subseteq At_1^{-1} = t_1^{-1}A \). 
For each \( h \in H \), we find that 
\[ 
|(\beta^{t_1}H)^{-1} \cap (\beta^{t_1})^{-1}H| = |(\beta^{t_1}H)^{-1} \cap h(\beta^{t_1})^{-1}H| = |H \cap H^{\beta^{t_1}}| \geq 1. 
\]
Let \( h(\beta^{t_1})^{-1}b \in (\beta^{t_1}H)^{-1} \cap h(\beta^{t_1})^{-1}H \) for some \( b \in H \). Then, we have 
\[ 
h(\beta^{t_1})^{-1}bH \subseteq H(\beta^{t_1})^{-1}H \subseteq HXH \cap t_1^{-1}A = XH \cap t_1^{-1}A. 
\] 
Thus, for every \( h \in H \), it follows that 
\[ 
h(\beta^{t_1})^{-1}bH = h(\beta^{t_1})^{-1}H = (\beta^{t_1})^{-1}H. 
\] 
Consequently, we arrive at the contradiction 
\(
H(\beta^{t_1})^{-1}H = (\beta^{t_1})^{-1}H. 
\)
Therefore, we conclude that \( \beta^{t_1} \in N_G(H) \), as required.
Thus, since \(X\) contains a left transversal of \(A\), we can conclude that \(G=N_G(H)A\). We may assume \(T \subseteq N_G(H)\). Therefore, the set \(\{tH \mid t \in T\}\) serves as a left transversal of \(N_A(H)/H\) within \(N_G(H)/H\). To complete the proof, we need to demonstrate that \(|XH \cap (tN_A(H))/H|=1\) and \(|XH \cap N_A(H)/H|=r\) for each \(t \in T \setminus A\). It is evident that \(tN_A(H)/H \subseteq tA/_{\ell} H\) for every \(t \in T\). Hence, \(XH \cap tN_A(H)/H\) contains at most one left coset of \(H\) when \(t \in T \setminus A\). 
On the other hand, for each \(t \in T \setminus A\), we have \(XH \cap tA = \beta^tH\), where \(\beta^t \in N_G(H)\). Thus, \(\beta^t = ta \in N_G(H)\) for some \(a \in A\). Given that \(t \in N_G(H)\), it follows that \(a \in N_A(H)\). Consequently, \(\beta^tH = taH \subseteq tN_A(H)\), which implies that \(|XH \cap tN_A(H)/H|= 1\) for every \(t \in T \setminus A\).
Furthermore, by Lemma \ref{khodom}, we know that \(N_A(H)/H\) is a \((r,0)\)-regular subset of \(N_G(H)/H\). Therefore, there exists an inverse closed set \(YH\) of \(N_G(H)/H\) such that \(YH\) is a union of \(r\) disjoint left cosets of \(H\) in \(N_A(H)/H\). We define \(X_0H = YH \cup \{\beta^{t_j}H\}_{j=1}^n\). 
It is noteworthy that \((XH \cap (G \setminus A))^{-1} = (\{\beta^{t_j}H\}_{j=1}^n)^{-1} = HX^{-1} \cap (G \setminus A) = XH \cap (G \setminus A) = \{\beta^{t_j}H\}_{j=1}^n\). Thus, we have \((X_0H)^{-1} = (YH)^{-1} \cup (\{\beta^{t_j}H\}_{j=1}^n)^{-1} = HY^{-1} \cup (\{\beta^{t_j}H\}_{j=1}^n)^{-1}= YH \cup \{\beta^{t_j}H\}_{j=1}^n = X_0H\).
In conclusion, we determine that \(N_A(H)/H\) is an \((r,1)\)-regular set of \(N_G(H)/H\), as required. $\blacksquare$
 
 \begin{corollary}
 	Let $H\unlhd A\unlhd G$. Then $A$ is a perfect code of $(G,H)$ if and only if  $H\unlhd G$ and $A/H$ is a perfect code of $G/H$ 
 	\end{corollary}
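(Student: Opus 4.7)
The plan is to apply Theorem C directly, exploiting the extra hypothesis that $H$ is normal in $A$ to collapse the two conditions appearing in the conclusion of Theorem C into the single condition $H\unlhd G$. The first observation I would record is that since $H\unlhd A$, every element of $A$ normalizes $H$, so $A\leq N_G(H)$; consequently $N_G(H)A=N_G(H)$ and $N_A(H)=A$. With these identifications, Theorem C specialized to our situation reads: $A$ is a perfect code of $(G,H)$ if and only if $G=N_G(H)$ and $A/H$ is a perfect code of $N_G(H)/H$.

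For the forward direction, assume $A$ is a perfect code of $(G,H)$. By Theorem C together with $N_A(H)=A$, we obtain $G=N_G(H)A$ and $A/H$ is a perfect code of $N_G(H)/H$. Since $N_G(H)A=N_G(H)$ by the observation above, the equality $G=N_G(H)A$ forces $G=N_G(H)$, that is, $H\unlhd G$. But then $N_G(H)/H=G/H$, so $A/H$ is a perfect code of $G/H$, as desired.

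For the backward direction, assume $H\unlhd G$ and $A/H$ is a perfect code of $G/H$. Then $N_G(H)=G$, so trivially $G=N_G(H)A$, and moreover $N_G(H)/H=G/H$ and $N_A(H)/H=A/H$. Thus the hypotheses of the converse part of Theorem C (which only asserts the converse for $s=1$, i.e., precisely the perfect-code case) are satisfied, and we conclude that $A$ is a perfect code of $(G,H)$.

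There is no real obstacle here: this is a direct specialization of Theorem C. The only point that needs to be stated cleanly is the collapse $N_G(H)A=N_G(H)$ coming from $H\unlhd A$, which turns the mixed condition ``$G=N_G(H)A$'' of Theorem C into the cleaner normality condition ``$H\unlhd G$''.
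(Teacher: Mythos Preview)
Your proof is correct and follows exactly the paper's approach: apply Theorem~C and use $H\unlhd A$ to get $A\leq N_G(H)$, whence $N_G(H)A=N_G(H)$ and $N_A(H)=A$, so the conditions in Theorem~C collapse to $H\unlhd G$ and $A/H$ being a perfect code of $G/H$. One terminological quibble: in your backward paragraph you are actually invoking the \emph{forward} implication of Theorem~C (conditions $\Rightarrow$ $A$ is a perfect code), not its converse---the converse of Theorem~C is what drives your forward paragraph.
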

 \begin{proof}
 	According to Theorem C, we have  $A$ is a perfect code of $(G,H)$ if and only if $G=N_G(H)A=N_G(H)$ and $A/H=N_A(H)/H$  is a perfect code of $G/H$. 
 	\end{proof}

\begin{corollary}
	Let $H\leq A\unlhd G$ and $|A|$ or $|G:A|$ is odd. Then $A$ is a perfect code of $(G,H)$ if and only if $G=N_G(H)A$.
\end{corollary}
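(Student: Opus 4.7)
The plan is to reduce the corollary to Theorem~C and check that, under the parity hypothesis, the second condition of Theorem~C becomes automatic. Theorem~C asserts that $A$ is a perfect code of $(G,H)$ if and only if $G=N_G(H)A$ and $N_A(H)/H$ is a perfect code of $N_G(H)/H$. The forward direction of the corollary is therefore immediate. For the converse, assuming $G=N_G(H)A$ and that either $|A|$ or $|G:A|$ is odd, I must show that $K := N_A(H)/H$ is automatically a perfect code of $L := N_G(H)/H$.

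Because $A\unlhd G$, the equality $N_A(H)=N_G(H)\cap A$ shows $N_A(H)\unlhd N_G(H)$ and hence $K\unlhd L$. By the criterion of \cite[Theorem 2.2]{HXZ} recalled in the introduction, to conclude that $K$ is a perfect code of $L$ it suffices to prove that for every $x\in L$ with $x^2\in K$ there exists $b\in K$ such that $(xb)^2=1$.

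I split into two cases according to which order is odd. If $|A|$ is odd, then $|K|$ divides $|N_A(H)|$ which divides $|A|$, so $|K|$ is odd and the squaring map on $K$ is a bijection. Given $x\in L$ with $c:=x^2\in K$, let $d\in K$ be the unique square root of $c$. Conjugation by $x$ fixes $c$ (since $c$ commutes with $x$) and permutes the square roots of $c$ in $K$; by uniqueness it must fix $d$. Then a direct computation yields $(xd^{-1})^2 = x^2\cdot(d^{-1})^x\cdot d^{-1} = c\,d^{-2} = 1$, so $b=d^{-1}$ works. If instead $|G:A|$ is odd, the product formula together with $G=N_G(H)A$ gives $|L:K|=|N_G(H):N_A(H)|=|G:A|$, which is odd. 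Then $x^2\in K$ forces the image of $x$ in $L/K$ to have order dividing both $2$ and $|L:K|$, hence trivial; so $x\in K$ and $b=x^{-1}$ works.

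The only delicate point is the odd-$|A|$ case, where one must notice that $c$ is centralized by $x$ simply because $c=x^2$, and that uniqueness of square roots in a group of odd order forces $d$ to be centralized by $x$ as well. Once this observation is in place, the rest of the proof is bookkeeping: invoke Theorem~C, combine the forward implication with the verification just given, and the corollary follows.
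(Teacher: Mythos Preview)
Your proof is correct and follows essentially the same route as the paper: reduce via Theorem~C to showing that $N_A(H)/H$ is a perfect code of $N_G(H)/H$, and then observe that the parity hypothesis forces either $|N_A(H)/H|$ or $|N_G(H):N_A(H)|$ to be odd, which makes the normal subgroup $N_A(H)/H$ a perfect code automatically. The only difference is cosmetic: the paper cites \cite[Corollary~2.3]{HXZ} for this last step, whereas you reprove that corollary by hand from the characterization in \cite[Theorem~2.2]{HXZ} (Lemma~\ref{asli}) via the square-root argument in the odd-$|A|$ case and the trivial-coset argument in the odd-index case.
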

\begin{proof}
In this case $|N_A(G)/H|$ or $|N_G(H): N_A(H)|$ is odd and so the result follows from Theorem C and \cite[Corollary 2.3]{HXZ}.
\end{proof}

We use the following lemma in the proof of Corollary D:

\begin{lemma}\label{asli}
	(see \cite[Theorem 2.2]{HXZ})
	Let $G$ be a group and $H$ a normal subgroup of $G$. Then $H$ is a perfect
	code of $G$ if and only if for all $x \in G$, $x^2 \in H$ implies $(xh)^2 =1$  for some $h \in  H$.
\end{lemma}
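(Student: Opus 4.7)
The plan is to derive Lemma \ref{asli} as a direct specialization of Theorem A. Taking the reference subgroup in Theorem A to be trivial and the subgroup $A$ there to be the $H$ of the present lemma, with parameters $(r,s)=(0,1)$, Theorem A asserts that $H$ is a perfect code of $G$ if and only if there exists a subset $X\subseteq G$ such that $X=X^{-1}$, $X\cap H=\varnothing$, and $|X\cap tH|=1$ for every $t\in G\setminus H$; in other words, $X$ is an inverse-closed section of the non-trivial left cosets of $H$ in $G$. The whole task reduces to translating the existence of such an $X$ into the elementary involution condition in the statement.

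Next I would analyze what inverse-closedness imposes on such a section. Since $H\unlhd G$, the map $tH\mapsto t^{-1}H$ is a well-defined involution on the set of non-trivial cosets. For each $t\in G\setminus H$, if $x_t$ denotes the unique element of $X\cap tH$, then $x_t^{-1}\in t^{-1}H$, and by uniqueness $x_{t^{-1}}=x_t^{-1}$. Hence when $tH\neq t^{-1}H$ (equivalently $t^2\notin H$), the values $x_t$ and $x_{t^{-1}}$ are freely chosen and determine each other by inversion. When $tH=t^{-1}H$ (equivalently $t^2\in H$), uniqueness forces $x_t=x_t^{-1}$, i.e.\ $x_t^2=1$, so the coset $tH$ must contain an involution of $G$.

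For the forward implication, given such an $X$ and any $x\in G$ with $x^2\in H$: if $x\in H$ take $h=x^{-1}$ so that $(xh)^2=1$; if $x\notin H$ then $xH=x^{-1}H$ is self-paired, and the previous paragraph produces an involution $x_x=xh\in xH$ for some $h\in H$, yielding $(xh)^2=1$. For the reverse implication I would construct $X$ coset-by-coset: pick a set $\mathcal{T}$ of representatives for the orbits of $tH\mapsto t^{-1}H$ on the non-trivial cosets of $H$; for each $t\in\mathcal{T}$ with $t^2\notin H$ choose any $g\in tH$ and place both $g$ and $g^{-1}$ into $X$; for each $t\in\mathcal{T}$ with $t^2\in H$ use the hypothesis to obtain $h\in H$ with $(th)^2=1$ and place the involution $th$ into $X$. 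The resulting $X$ is inverse-closed, disjoint from $H$, and meets each non-trivial coset exactly once, which by the characterization from Theorem A shows that $H$ is a perfect code of $G$.

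The only technical step is the coset-by-coset construction of $X$ in the backward direction, and this is essentially bookkeeping: normality of $H$ makes inversion a well-defined involution on left cosets, and the hypothesis supplies involutions in the self-paired cosets, so no genuine obstacle arises.
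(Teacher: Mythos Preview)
The paper does not prove this lemma; it simply quotes it from \cite[Theorem~2.2]{HXZ} and uses it as input to the proof of Corollary~D. Your argument is correct: specializing Theorem~A with the reference subgroup equal to $\{1\}$ and $(r,s)=(0,1)$ reduces the question to the existence of an inverse-closed section of the non-trivial cosets of $H$, and the coset-by-coset analysis you give (pairing $tH$ with $t^{-1}H$ and invoking the involution hypothesis on the self-paired cosets) is exactly the standard way to handle this. The small checks you flag---that normality of $H$ makes $tH\mapsto t^{-1}H$ well defined, and that the hypothesized involution lies in $tH\setminus\{1\}$ when $t\notin H$---all go through.

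It is worth noting that the paper does, in effect, reprove this result independently: Lemma~\ref{key2} with the subgroup $H$ there taken to be trivial (equivalently, Theorem~B or Corollary~B1(b) with $s=1$) yields precisely the statement of Lemma~\ref{asli}. Compared with that route, your derivation via Theorem~A is more direct, since it avoids the double-coset bookkeeping of Lemma~\ref{key2} and goes straight to the elementary transversal construction; the paper's route, on the other hand, treats the result as a special case of a uniform $(0,s)$ analysis. Either way, there is no gap in what you wrote.
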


Corollary D establishes the validity of the converse of a modification found in \cite[Corollary 3.9]{Wang-Zhang}.

\smallskip

\textbf{The proof of Corollary D.}\label{nag}
Let \( A \) be a perfect code of \( (G,H) \).   From Theorem C, it follows that \( G = AN_G(H) \). Furthermore, according to \cite[Corollary 3.9]{Wang-Zhang}, if \( x \in G \) and \( x^2 \in A \), then there exists \( b \in G \) such that \( (xb)^2 \in H \). To complete the proof, we only need to show that \( xb \in N_G(H) \). Remind that, there exists a left transversal of $A$, say $X\subseteq N_G(H)$ such that $XH=HX^{-1}$ is a union of some double cosets of $H$ and $XH\cap tA$ is a coset of $H$. Moreover,  Since \( A \) is a normal subgroup of \( G \), the set \( A\{x,x^{-1}\}A = xA \) must contain an element \( xb \) such that \( xbH = HxbH = H(xb)^{-1}H\subseteq XH \). This means that \( xb \in N_G(H) \), and thus we have completed the proof.   
		Conversely, suppose that \( G = N_G(H)A \) and for every \( x \in G \) with \( x^2 \in A \), there exists \( b \in A \) such that \( xb \in N_G(H) \) and \( (xb)^2 \in H \). In this case, we can conclude that \( N_A(H)/H \subseteq N_G(H)/H \). Furthermore, for every \( xH \in N_G(H)/H \) where \( (xH)^2 \in N_A(H)/H \), there exists \( b \in A \) such that \( xb \in N_G(H) \) and \( (xbH)^2 = H \). Since both \( x \) and \( xb \) are in \( N_G(H) \), it follows that \( b \in N_A(G) \), and consequently \( bH \in N_A(H)/H \). According to Theorem \ref{asli}, we find that \( N_A(H)/H \) is a perfect code of \( N_G(H)/H \), which allows us to conclude the proof, using Theorem C.  \qed
\begin{corollary}
	Let $A$ be a normal subgroup of $G$ and $H\in\Syl_p(A)$. Then $A$ is a perfect code of $(G,H)$ if and only if for every $x\in G\setminus A$ with $x^2\in A$, there exists $a\in A$ such that $xa\in N_G(H)$ and $(xa)^2\in H$.
\end{corollary}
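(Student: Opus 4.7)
The plan is to obtain this corollary as an immediate consequence of Corollary D, with the Sylow hypothesis used to eliminate both extra ingredients that distinguish the two statements. The first step is to produce the factorization $G=N_G(H)A$ for free, via Frattini's argument: since $H\in\Syl_p(A)$ and $A\unlhd G$, for any $g\in G$ the conjugate $H^g$ still lies in $\Syl_p(A)$, so Sylow's theorem yields $a\in A$ with $H^g=H^a$, whence $ga^{-1}\in N_G(H)$. This gives $G=N_G(H)A$ automatically, removing that hypothesis from Corollary D.

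The second step is to reconcile the two quantifier ranges. Corollary D quantifies over all $x\in G$ with $x^2\in A$, whereas the corollary above only quantifies over $x\in G\setminus A$. I would observe that for $x\in A$ the choice $b=x^{-1}\in A$ gives $xb=1\in N_G(H)$ and $(xb)^2=1\in H$, so the condition in Corollary D is automatically satisfied on $A$. Consequently the two quantifier versions define the same condition.

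Putting these two reductions together, the corollary reads exactly as Corollary D once the Frattini factorization has been observed, so the equivalence follows directly. I do not anticipate any genuine obstacle here; the main thing to be careful about is just verifying that the $x\in A$ case really is vacuous so that the restriction to $x\in G\setminus A$ loses nothing, which the choice $b=x^{-1}$ handles in one line.
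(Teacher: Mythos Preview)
Your proposal is correct and follows the same approach as the paper: apply Frattini's argument to obtain $G=N_G(H)A$ and then invoke Corollary~D. You are in fact slightly more careful than the paper, which simply says ``by Corollary D we get the result''; your explicit observation that the $x\in A$ case is handled by $b=x^{-1}$ justifies the restriction of the quantifier to $x\in G\setminus A$.
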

\begin{proof}
	As $H$ is a Sylow $p$-subgroup of $G$, then by well-known Frattini  argument we have that $G=N_G(H)A$ and so by Corollary D we get the result. 
\end{proof}

The following lemma and corollary give us  necessary conditions for a subgroup to be a perfect code, in case when we do not have the assumption of normality of $A$.  

\begin{lemma}\label{lem1}
	Let $H\leq A\leq G$ and $A$ be a perfect code of $(G,H)$. Then for every $g\in G$ there exists $a\in A$ such that $A^{ga}\cap H=H^{ga}\cap H$.
\end{lemma}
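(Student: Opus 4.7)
The plan is to invoke Theorem A in its perfect-code form ($r=0$, $s=1$) to extract a structural set $X \subseteq G$ and then execute a left-coset count inside a single double coset of $H$.

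Theorem A produces $X \subseteq G$ with $XH = HX^{-1}$; hence $XH$ is inverse-closed and, since $H \cdot XH = XH = XH \cdot H$, a union of double cosets of $H$. Also $XH \cap A = \varnothing$ and $XH \cap tA$ is a single left coset of $H$ for each $t \in G \setminus A$. If $g \in A$, I take $a = g^{-1}$, making both intersections equal $H$. So assume $g \notin A$; write $\beta H = XH \cap gA$ with $\beta = g a_1$, $a_1 \in A$, and claim $a = a_1$ works.

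Since $\beta^{-1} \in XH$ by inverse-closure and $\beta^{-1} \notin A$ (because $g \notin A$), Theorem A applied at $t = \beta^{-1}$ forces $XH \cap \beta^{-1}A = \beta^{-1}H$. Combined with $H\beta^{-1}H \subseteq XH$ this yields $H\beta^{-1}H \cap \beta^{-1}A = \beta^{-1}H$. I then enumerate the left cosets of $H$ inside $H\beta^{-1}H \cap \beta^{-1}A$: such a coset has the form $h\beta^{-1}H$ with $h \in H \cap \beta^{-1}A\beta$, and $h\beta^{-1}H = h'\beta^{-1}H$ iff $h^{-1}h' \in H \cap \beta^{-1}H\beta$. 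Since $H \leq A$ makes $H \cap \beta^{-1}H\beta$ a subgroup of $H \cap \beta^{-1}A\beta$, the number of distinct cosets is the index $[H \cap \beta^{-1}A\beta : H \cap \beta^{-1}H\beta]$. Equating this index to $1$ delivers $H \cap \beta^{-1}A\beta = H \cap \beta^{-1}H\beta$, which under the paper's convention $A^{ga} = (ga)^{-1}A(ga)$ is precisely $A^{ga_1} \cap H = H^{ga_1} \cap H$.

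The main subtlety I anticipate is the correct choice of auxiliary element at which to invoke Theorem A. Applying it at $t = g$ would instead yield $H \cap \beta A \beta^{-1} = H \cap \beta H \beta^{-1}$, which sits on the opposite side of the conjugation from what the lemma demands; it is the passage to $t = \beta^{-1}$ that inverts the conjugation direction and produces the form matching the convention $A^x = x^{-1}Ax$ inferred from the paper's earlier usage.
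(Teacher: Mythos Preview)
Your proof is correct and follows essentially the same approach as the paper's: both invoke Theorem~A for $(r,s)=(0,1)$ to obtain the inverse-closed union of double cosets $XH$, pick the element $\beta=ga$ representing $XH\cap gA$, and then use that $XH\cap\beta^{-1}A$ is a single left coset of $H$ to force $H\cap\beta^{-1}A\beta=H\cap\beta^{-1}H\beta$. The only cosmetic difference is that the paper argues by contradiction with a single element $b\in (A^{\beta}\cap H)\setminus H^{\beta}$ producing two distinct cosets $\beta^{-1}H\neq b\beta^{-1}H$ inside $XH\cap\beta^{-1}A$, whereas you package the same observation as the index computation $[H\cap\beta^{-1}A\beta:H\cap\beta^{-1}H\beta]=1$.
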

\begin{proof}
	Let $g\in G$.	As $A$ is a perfect code, by Theorem A, there exists a left transversal $X$ of $A$ in $G$ such that $XH=HX^{-1}$. Thus there exist $x\in X$ and $a\in A$ such that $x=ga$. Clearly, $H^{x}\cap H\subseteq A^{x}\cap H$. Suppose, towards a contradiction, that $A^{x}\cap H\neq H^{x}\cap H$. Then there exists $b\in A^{x}\cap H\setminus (H^{x}\cap H)$. Thus,  $x^{-1}, bx^{-1}\in x^{-1}A\cap Hx^{-1}$. Since $XH=HX^{-1}$, we conclude that $x^{-1}H\cup bx^{-1}H\subseteq (XH\cap x^{-1}A)$. On the other hand, $X$ is a left transversal of $A$ in $G$ and so we have $|XH\cap x^{-1}A|=|H|$, which implies that $x^{-1}H=bx^{-1}H$. Hence $b\in H^x\cap H$, a contradiction. This completes the proof.
\end{proof}

\begin{corollary}
	Let $H\unlhd A\leq G$ and $A$ is a perfect code of $(G,H)$. Then for every $x\in G$ we have $|HA^x|\mid |AA^x|$. 
\end{corollary}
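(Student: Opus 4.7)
The plan is to compute both $|HA^x|$ and $|AA^x|$ via the standard set-product formula $|XY|=|X||Y|/|X\cap Y|$, and then show the ratio is a positive integer using the hypothesis $H\unlhd A$.

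First, since $|A^x|=|A|$ and $H\cap A^x\leq A\cap A^x$ (the latter from $H\leq A$), I obtain
\[
\frac{|AA^x|}{|HA^x|}=\frac{|A|^2/|A\cap A^x|}{|H||A|/|H\cap A^x|}=\frac{[A:H]}{[A\cap A^x:\,H\cap A^x]},
\]
so the claim reduces to showing $[A\cap A^x:H\cap A^x]\mid [A:H]$.

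The key step will use normality: because $H\unlhd A$ and $A\cap A^x\leq A$, the product $H(A\cap A^x)$ is a subgroup of $A$, of order $|H|\,|A\cap A^x|/|H\cap A^x|$ (using $H\cap A=H$ to identify $H\cap(A\cap A^x)$ with $H\cap A^x$). Since $H(A\cap A^x)\leq A$, this order divides $|A|$, and dividing by $|H|$ yields $[A\cap A^x:H\cap A^x]\mid [A:H]$, as required.

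The only real obstacle is spotting $H(A\cap A^x)$ as the right auxiliary subgroup, which is made available precisely by $H\unlhd A$; once that is identified, everything else is routine index arithmetic. I remark that Lemma~\ref{lem1} (and hence the perfect-code hypothesis on $A$) does not actually enter this particular divisibility, though one could incorporate it cosmetically by first replacing $x$ with $xa$ for a suitable $a\in A$, which leaves both $|HA^x|$ and $|AA^x|$ unchanged (since $a\in A$ normalizes both $A$ and $H$) while additionally achieving $H\cap A^x=H\cap H^x$.
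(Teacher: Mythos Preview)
Your argument is correct. The computation $|AA^x|/|HA^x|=[A:H]/[A\cap A^x:H\cap A^x]$ is accurate, and the second isomorphism theorem applied to $H\unlhd A$ and $A\cap A^x\leq A$ gives $[H(A\cap A^x):H]=[A\cap A^x:H\cap A^x]$, so Lagrange's theorem in $A$ finishes the job. You are also right that the perfect-code hypothesis is never invoked.

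This is a genuinely different route from the paper's proof. The paper argues as follows: using the inverse-closed transversal $X$ of $A$ guaranteed by the perfect-code assumption, it counts the left $H$-cosets in $XH\cap A\{x,x^{-1}\}A$ in two ways (once via double cosets $Ht_iH$, once via left $A$-cosets) to deduce that $|H|/|H\cap H^{x}|$ divides $|A|/|A\cap A^{x}|$; it then invokes Lemma~\ref{lem1} to replace $|H\cap H^{x}|$ by $|H\cap A^{x}|$ and conclude. Your approach bypasses all of this with a single auxiliary subgroup $H(A\cap A^x)$. What the paper's detour buys is the sharper intermediate divisibility $\bigl(|H|/|H\cap H^{x}|\bigr)\,\big|\,\bigl(|A|/|A\cap A^{x}|\bigr)$, which genuinely requires the perfect-code hypothesis (for instance, it fails for $G=S_4$, $A=\langle(1234),(13)\rangle$, $H=\langle(1234)\rangle$, $x=(12)$, where the left side is $4$ and the right side is $2$). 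What your approach buys is a two-line proof and the observation that the corollary, as stated, holds for \emph{every} pair $H\unlhd A\leq G$, with the perfect-code assumption playing no role.
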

\begin{proof}
If \( x \in A \), we have nothing further to prove. Thus, let \( x \in G \setminus H \). Then \( A \{x, x^{-1}\} A \) represents a union of some double cosets of \( H \), which we can express as \( A \{x, x^{-1}\} A = \bigcup_{i=1}^{l} H \{t_i, t_i^{-1}\} H \), where \( l \) is an integer and \( t_i \in A \{x, x^{-1}\} A \). Since \( H \unlhd A \), we find that \( H t_i H = H a_i x b_i H \) or \( H a_i x^{-1} b_i H \) for some \( a_i, b_i \in A \).
Furthermore, we have either 
\[
\frac{|H t_i H|}{|H|} = \frac{|H|}{|H \cap H^{t_i}|} = \frac{|H|}{|H \cap H^{a_i x b_i}|} = \frac{|H|}{|H^{b_i^{-1}} \cap H^x|} = \frac{|H|}{|H \cap H^x|}
\]
or, similarly, 
\[
\frac{|H t_i H|}{|H|} = \frac{|H|}{|H \cap H^{x^{-1}}|}.
\]
On another note, it follows that \( |H \cap H^x| = |H \cap H^{-x}| \). 
Thus, we conclude that \( \frac{|H t_i H|}{|H|} = \frac{|H t_j H|}{|H|} = \frac{|H|}{|H \cap H^x|} \) for all \( 1 \leq i, j \leq l \). In the sequel, we aim to demonstrate that \( \frac{|H|}{|H \cap H^x|} \) divides \( \frac{|A|}{|A \cap A^x|} \) for each \( x \in G \setminus A \).
	As \( A \) is a perfect code of \( (G,H) \), there exists a left transversal of \( A \) in \( G \), denoted as \( X \), such that \( XH = HX^{-1} \). Therefore, we can conclude that \( XH = HXH = HX^{-1}H = HX^{-1} \). This leads us to \( XH \cap A\{x, x^{-1}\}A = HXH \cap A\{x,x^{-1}\}A = HZH \), where \( Z \subseteq A\{x,x^{-1}\}A \).	
	In fact, we have \( HXH \cap A\{x, x^{-1}\}A = \bigcup_{i=1}^k Hx_iH \), for some natural number \( k \) and elements \( x_i \in A\{x,x^{-1}\}A \). Since \( HXH \cap A\{x, x^{-1}\}A \) is inverse-closed, we can assume that \( \bigcup_{i=1}^k Hx_iH = \bigcup_{i=1}^{s} H\{t_i, t_i^{-1}\}H \), for some \( s \leq l \). Thus, we can assume \( Z = \{\{t_i, t_i^{-1}\}\}_{i=1}^s \).
		Finally, we find that \( \frac{|HZH|}{|H|} = \frac{|Z||H|}{|H \cap H^x|} \). 
	On the other hand, we have $XH\cap A\{x,x^{-1}\}A=LH$, where  
	\[
	|L|=\frac{|A\{x,x^{-1}\}A|}{|A|}=\frac{2|AxA|}{|A|}=\frac{2|A|}{|A\cap A^x|},
	\]
	if $AxA\neq Ax^{-1}A$, and 
	\[
	|L|=\frac{|A\{x,x^{-1}\}A|}{|A|}=\frac{|AxA|}{|A|}=\frac{|A|}{|A\cap A^x|},
	\]
	if $AxA=Ax^{-1}A$. 
		First,  assume $AxA\neq Ax^{-1}A$. In this case, since $Z=Z^{-1}$, we observe that $|Z|=2k$, because the inverse of each element in $AxA$ belongs to $Ax^{-1}A$. Therefore, we find that 
	$$|L|=\frac{2|A|}{|A\cap A^x|}=|Z|\frac{|H|}{|H\cap H^x|}=2k \frac{|H|}{|H\cap H^x|},$$ 
	which indicates that $\frac{|H|}{|H\cap H^x|}$ divides $\frac{|A|}{|A\cap A^x|}$. 
		Now, consider the case where $AxA=Ax^{-1}A$. In this situation, we have 
	$$|L|=\frac{|A|}{|A\cap A^x|}=|Z|\frac{|H|}{|H\cap H^x|},$$ 
	and once again, $\frac{|H|}{|H\cap H^x|}$ divides $\frac{|A|}{|A\cap A^x|}$. 
		Moreover, according to Lemma \ref{lem1}, there exists an element $a\in A$ such that $H\cap H^{xa}=H\cap A^{xa}$. This suggests that $|H\cap H^x|=|H\cap A^x|$, meaning that $\frac{|H|}{|H\cap A^x|}$ divides $\frac{|A|}{|A\cap A^x|}$, and thus $|HA^x|\mid |AA^x|$.
	%
\end{proof}

 By the following result, one can completely determine the structure of arc-transitive graphs.  
\begin{lemma}(\cite[Theorem 3.2.8]{Symmetry})
	For a vertex-transitive digraph $\Gamma$ with $G\leq\Aut(\Gamma)$ a transitive subgroup, $\Gamma$ is $G$-arc-transitive if and only if $\Gamma\cong\Cos(G,H,S)$, where $H=G_u$ is the stabilizer of $u$ in $G$, for some $u\in {\bf V}(\Gamma)$, and $S=HsH$ is a single double coset of $H$ for some $s\in G$.
\end{lemma}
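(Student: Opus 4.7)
The plan is to leverage Lorimer's theorem already cited in the introduction, which provides a coset-graph representation $\Gamma\cong\Cos(G,H,U)$ for any $G$-vertex-transitive (di)graph with $H=G_u$ for some base vertex $u$. With this representation in hand, the lemma reduces to characterizing when $\Cos(G,H,U)$ is $G$-arc-transitive purely in terms of the structure of $U$; I would aim to show this is equivalent to $U$ being a single double coset $HsH$.

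First I would identify the chosen base vertex $u$ with the trivial coset $H$, so that $G_H=H$ acts on the vertex set by left multiplication. The out-arcs at $H$ are precisely the pairs $(H,gH)$ with $g\in U$, and since $U$ is itself a union of left cosets of $H$, these arcs are in bijection with the set of left cosets $gH\subseteq U$. Next I would compute the stabilizer action: for $h\in H$,
$$h\cdot(H,gH)=(H,hgH),$$
so the $H$-orbit of $(H,gH)$ consists exactly of the arcs corresponding to the left cosets of $H$ inside $HgH$. Consequently, the $H$-orbits on out-arcs at $H$ are in bijection with the double-coset decomposition of $U$.

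The final step invokes the standard equivalence that, for a vertex-transitive action, arc-transitivity is equivalent to transitivity of a point stabilizer on its out-arcs. If $\Gamma$ is $G$-arc-transitive then $H$ must act transitively on the out-arcs at $H$, forcing $U$ to consist of a single double coset $S=HsH$. Conversely, if $U=HsH$ then the out-arcs at $H$ already form a single $H$-orbit, and combining this with $G$-vertex-transitivity upgrades the situation to $G$-arc-transitivity, yielding $\Gamma\cong\Cos(G,H,HsH)$ as required.

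The main obstacle, and essentially the only step requiring genuine care, is the orbit computation: one must verify that the natural $H$-action on out-arcs of $\Cos(G,H,U)$ truly is left multiplication on the corresponding cosets, so that the partition of $U$ into double cosets coincides with the $H$-orbit partition on out-arcs. Once this identification is secured, both implications follow immediately from the vertex-transitive/arc-transitive dictionary, and the rest is bookkeeping on top of Lorimer's theorem.
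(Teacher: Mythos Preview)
Your argument is correct and is essentially the standard proof of this fact. However, there is nothing to compare against: in the paper this lemma is not proved at all but merely quoted from \cite[Theorem~3.2.8]{Symmetry}, so the paper's ``proof'' is a bare citation. Your sketch supplies exactly the missing content---Lorimer's coset-graph representation together with the identification of $H$-orbits on out-arcs at the base vertex with the double-coset decomposition of $U$---and this is precisely how the result is established in the cited reference.
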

 Building upon the aforementioned proposition, we arrive at a definitive set of necessary and sufficient conditions for a subgroup to qualify as a perfect code within the context of an arc-transitive graph. Notably, these conditions encompass the hypothesis addressed in Lemma \ref{lem1}.     
\begin{proposition}
	Let $\Gamma=\Cos(G,H,S)$, where $H=G_u$ for some $u\in V(\Gamma)$ and $S=HxH$ for some $x\in G$ is a $G$-arc-transitive graph and $H\leq A\leq G$. Then $S=S^{-1}$, and the set of left cosets of $H$ in $A$ is a perfect code of $\Gamma$ if and only if the following hold
	\begin{itemize}
		\item [(1)] $G=A\cup AxA$,
		\item[(2)] $H\cap  H^{x}= H\cap  A^{x}$,
		\item[(3)] for each $ a\in A$ there exists $h\in H$ such that $ha\in A\cap A^{x}$.
	\end{itemize}
	\end{proposition}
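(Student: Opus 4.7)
The plan is to unpack ``perfect code of $\Gamma = \Cos(G,H,HxH)$'' into explicit coset conditions and match them against (1)--(3), freely using $S = S^{-1}$ to shift between the $x$- and $x^{-1}$-forms. To begin, since $\Gamma$ is an undirected $G$-arc-transitive graph, we must have $S = S^{-1}$, i.e.\ $HxH = Hx^{-1}H$; in particular $x^{-1} = h_0 x h_1$ for some $h_0, h_1 \in H$. The perfect-code condition on $A/_\ell H$ then reads: (i) $HxH \cap A = \varnothing$, and (ii) $|tHxH\cap A| = |H|$ for every $t \in G\setminus A$.

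For the forward direction, (i) gives $x\notin A$. The existence half of (ii) yields $t h_1 x \in A$ for some $h_1 \in H$, so $t \in Ax^{-1}H\subseteq AxA$, establishing (1). For (2), I would look at the unique neighbor of $xH$ in $A/_\ell H$: writing $S$ as $Hx^{-1}H$, this neighbor has the form $xh_1 x^{-1}H$ with $xh_1 x^{-1}\in A$, so $h_1$ ranges over $H\cap A^x$; two values give the same coset iff $h_1^{-1}h_1' \in H\cap H^x$, and uniqueness forces $|H\cap A^x| = |H\cap H^x|$, i.e.\ (2). For (3), a two-way edge count between $A/_\ell H$ and its complement yields $|H:H\cap H^x| = |G:A|-1$; combined with (1) and (2) this gives $|H(A\cap A^x)| = |A|$, hence $A = H(A\cap A^x)$.

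For the reverse direction, assume (1), (2), (3). The first step is a bootstrapping lemma: (2) together with $S = S^{-1}$ implies its mirror $H\cap A^{x^{-1}} = H\cap H^{x^{-1}}$. Given $h\in H$ with $x^{-1}hx\in A$, I would use $x^{-1} = h_0 x h_1$ to rewrite $x^{-1}hx = h_0(xh'x^{-1})h_0^{-1}$ with $h' := h_1 h h_1^{-1}\in H$; then $h_0\in H\leq A$ converts the hypothesis into $xh'x^{-1}\in A$, and (2) applied to $h'$ yields $xh'x^{-1}\in H$, so $x^{-1}hx\in H$. With (2) and its mirror in hand, I would derive $|H:H\cap H^x| = |G:A|-1$ from (1) and (3), and then $|Ax^{-1}H| = |AxA|$; combined with the inclusion $Ax^{-1}H\subseteq AxA$ (from $x^{-1}\in HxA$) this gives $Ax^{-1}H = AxA = G\setminus A$. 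Finally, for each $t\in G\setminus A$ the set $L_t := \{h\in H: thx\in A\}$ is nonempty, and fixing $h_0\in L_t$ a direct computation shows $L_t = h_0(H\cap A^{x^{-1}})$; since two indices $h_1, h_1'\in L_t$ produce the same coset $th_1 xH$ iff $h_1^{-1}h_1'\in H\cap H^{x^{-1}}$, the number of neighbor cosets equals $|H\cap A^{x^{-1}}|/|H\cap H^{x^{-1}}| = 1$ by the mirror of (2), establishing (ii).

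The main obstacle is the asymmetry between $x$ and $x^{-1}$ in condition (2): the natural analysis of $|tHxH\cap A|$ for $t\in G\setminus A$ produces the $x^{-1}$-form, whereas (2) is given only for $x$. Reconciling the two via the identity $x^{-1}\in HxH$ --- the bootstrapping step above --- is where the proof becomes delicate; the remainder reduces to careful bookkeeping of double-coset sizes and a single edge count.
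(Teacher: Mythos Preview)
Your argument is correct, but it diverges from the paper's in two places and is somewhat heavier as a result.

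For the forward direction of (3), the paper does not count edges. Given $a\in A$, it simply notes that $ax^{-1}\notin A$, so $ax^{-1}A\cap S$ is a single left coset $hx^{-1}H$ for some $h\in H$; unwinding $hx^{-1}\in ax^{-1}A$ gives $a^{-1}h\in A^{x}$, hence $h^{-1}a\in A\cap A^{x}$. Your global double count (degree $|H:H\cap H^{x}|$ from inside, $1$ from outside) followed by $|H(A\cap A^{x})|=|A|$ is perfectly valid and has the mild advantage of yielding the numerical identity $|H:H\cap H^{x}|=|G:A|-1$ for free, but it is more work than the one-line direct construction.

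For the reverse direction, your ``bootstrapping'' step to obtain the mirror $H\cap A^{x^{-1}}=H\cap H^{x^{-1}}$ is correct, but it is an artefact of parametrising $S=HxH$. The paper instead writes $S=Hx^{-1}H$, so every left coset in $S$ has the form $hx^{-1}H$; the equivalence ``$h_{1}x^{-1}H=h_{2}x^{-1}H$ iff $h_{1}^{-1}h_{2}\in H\cap H^{x}$'' and ``$hx^{-1}\in tA$ iff $\cdots\in A^{x}$'' then match (2) on the nose. With that parametrisation the paper, for $t\notin A$, writes $tA=ax^{-1}A$ via (1), uses (3) to produce $h$ with $ha\in A\cap A^{x}$ so that $h^{-1}x^{-1}H\subseteq tA\cap S$, and proves uniqueness directly from (2). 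No mirror of (2), no computation of $|Ax^{-1}H|$, no $L_{t}$-coset count. Your route reaches the same conclusion but through two extra counting steps and the bootstrapping lemma; the insight worth taking away is that choosing the $x^{-1}$-parametrisation of $S$ aligns all the conditions with (2) as stated and eliminates the asymmetry you identified as the main obstacle.
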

	\begin{proof}
			First, suppose that the set of left cosets of \( H \) in \( A \) is a perfect code of \( \Gamma \). Let \( y \in G \setminus A \) such that \( y \in G \setminus A \). Then \( S \cap yA \) is equal to a left coset of \( H \), which means there exists \( a \in A \) such that \( yaH = S \cap yA \subseteq S \subseteq AxA \). Therefore, \( y \in AxA \). This implies that \( G = A \cup AxA =A\cup Ax^{-1}A\). 
				Now, let \( h \in H \cap A^{x} \setminus H^{x} \). Then \( xhx^{-1} \notin H \), implying that \( h^{-1}x^{-1}H \neq x^{-1}H \). Since \( h \in A^{x} \), we have \( h^{-1}x^{-1}A = x^{-1}A \), which means that \( x^{-1}A \cap S = x^{-1}A \cap Hx^{-1}H \) contains more than one coset of \( H \), leading to a contradiction. Thus, we conclude that \( H \cap A^{x} \subseteq H^{x} \), as intended. 		
		Finally, we will show that condition (3) holds. If \( a \in A \cap A^{x} \), there is nothing to prove. So, we assume \( a \in A \setminus A^{x} \). Then \( ax^{-1}A \neq x^{-1}A \). Therefore, there exists \( h \in H \) such that \( hx^{-1}H \subseteq ax^{-1}A \cap Hx^{-1}H = ax^{-1}A \cap S \). Hence, \( xa^{-1}hx^{-1} \in A \), which means \( a^{-1}h \in A^{x} \). Consequently, \( h^{-1}a \in A^{x} \cap A \). This completes the proof of one direction.
			
		Conversely, assume that conditions (1)-(3) hold. We only need to prove that \( S \cap tA \) contains exactly one coset of \( H \) for each \( t \in G \setminus A \). Let \( t \notin A \). Then \( tA \subseteq AxA=Ax^{-1}A \), which means \( t = ax^{-1} \) for some \( a \in A \). There exists \( h \in H \) such that \( ha \in A \cap A^{x} \), implying that \( xhax^{-1} \in A \) or, equivalently, \( ax^{-1}A = h^{-1}x^{-1}A \). Thus, \( h^{-1}x^{-1}H \subseteq ax^{-1}A \cap S = tA \cap S \). 
				Now, we claim that \( tA \cap S = ax^{-1}A \cap S = h^{-1}x^{-1}H \). Suppose, for the sake of contradiction, that there exists \( h_0 \) such that \( h_0x^{-1}H \neq h^{-1}x^{-1}H \) and \( h_0x^{-1}H \subset ax^{-1}A \cap S \). Then \( hh_0 \not\in H \cap H^{x} = H \cap A^{x} \). This implies that \( h_0xA \neq h^{-1}x^{-1}A = ax^{-1}A \), a contradiction. This completes the proof.		
			\end{proof}



\end{document}